\newtheorem{theorem}{Theorem}
\newtheorem{lemma}{Lemma}
\title{A note on the convergence of the monotone inclusion version of the primal-dual hybrid gradient algorithm}
\author{Levon Nurbekyan\\
Department of Mathematics, Emory University\\
\href{mailto:lnurbek@emory.edu}{lnurbek@emory.edu}}
\begin{document}
\maketitle

\begin{abstract}
The note contains a direct extension of the convergence proof of the primal-dual hybrid gradient (PDHG) algorithm in~\cite{chambolle16ergodic} to the case of monotone inclusions.
\end{abstract}    

\section{Introduction}

Assume that $\mathcal{H}_1,\mathcal{H}_2$ are Hilbert spaces, and $A:\mathcal{H}_1 \to 2^{\mathcal{H}_1}$, $B:\mathcal{H}_2 \to 2^{\mathcal{H}_2}$ are maximally monotone maps. Furthermore, assume that $C:\mathcal{H}_1 \to \mathcal{H}_2$ is a non-zero bounded linear operator, and consider the following pair of primal-dual monotone inclusions
\begin{equation}\label{eq:main}
    \begin{split}
        \text{find}~x\in \mathcal{H}_1~\text{s.t.}~0\in Ax+C^*(B(Cx))&\quad \text{(P)}\\
        \text{find}~y\in \mathcal{H}_2~\text{s.t.}~y\in B(Cx),~-C^*y\in Ax,~\text{for some}~x\in \mathcal{H}_1&\quad \text{(D)} 
    \end{split}
\end{equation}

When $A,B$ are subdifferential maps of proper convex lower semicontinuous functions, this previous problem reduces to a pair of primal-dual convex programs or a convex-concave saddle point problem. More specifically, if $A=\partial f_1$, $B=\partial f_2$ for $f_1 : \mathcal{H}_1 \to \overline{\mathbb{R}}$, $f_2 : \mathcal{H}_2 \to \overline{\mathbb{R}}$ then~\eqref{eq:main} is equivalent to
\begin{equation}\label{eq:main_functions}
\begin{split}
    \inf_{x \in \mathcal{H}_1} \left\{f_1(x)+f_2(Cx)\right\}=&\inf_{x \in \mathcal{H}_1} \sup_{y \in \mathcal{H}_2} \left\{f_1(x)+\langle C x, y \rangle - f_2(y) \right\}\\
    =&\sup_{y \in \mathcal{H}_2} \left\{ -f_1^*(-C^* y)-f_2(y)\right\}.
\end{split}
\end{equation}

In~\cite{chambolle11firstorder,chambolle16ergodic}, the authors introduced a first-order primal-dual splitting scheme for solving~\eqref{eq:main_functions}, which in its simplest form reads as
\begin{equation}\label{eq:pdhg_functions}
    \begin{cases}
        x^{n+1}=\underset{x\in \mathcal{H}_1}{\operatorname{argmin}}~f_1(x)+\langle Cx,y^n \rangle+\frac{\|x-x^n\|^2}{2\tau },\\
        \Tilde{x}^{n+1}=2x^{n+1}-x^n,\\
        y^{n+1}=\underset{y\in \mathcal{H}_2}{\operatorname{argmax}}~\langle C\Tilde{x}^{n+1},y \rangle-f_2(y)-\frac{\|y-y^n\|^2}{2\sigma},
    \end{cases}
\end{equation}
where $\tau,\sigma>0$. The main results in~\cite{chambolle11firstorder,chambolle16ergodic} provide convergence of ergodic sequences
\begin{equation}\label{eq:ergodic}
    X^N=\frac{1}{N}\sum_{n=1}^N x_i,\quad Y^N=\frac{1}{N}\sum_{n=1}^N y_i,
\end{equation}
under the assumption
\begin{equation}\label{eq:timesteps}
    \tau \sigma <\frac{1}{\|C\|^2}.
\end{equation}

In~\cite{vu13}, the author considers a more general version of~\eqref{eq:main} and introduces a splitting scheme, which in its simplest form reads as
\begin{equation}\label{eq:pdhg_inclusions}
    \begin{cases}
    x^{n+1}=(I+\tau A)^{-1} \left( x^n-\tau C^* y^n\right),\\
    \Tilde{x}^{n+1}=2x^{n+1}-x^n,\\
    y^{n+1}=(I+\sigma B^{-1})^{-1}\left(y^n+\sigma C \Tilde{x}^{n+1} \right).
    \end{cases}
\end{equation}
Using techniques different from the ones in~\cite{chambolle11firstorder,chambolle16ergodic}, the author in~\cite{vu13} proves the convergence of the iterates in~\eqref{eq:pdhg_inclusions} to the solution of~\eqref{eq:main} under the same assumption~\eqref{eq:timesteps}. The key idea is to rewrite~\eqref{eq:pdhg_inclusions} in the form of a forward-backward splitting algorithm analyzed in~\cite{combettes04averaged}.

In this note, we provide a direct extension of the convergence proof of~\eqref{eq:pdhg_functions} in~\cite{chambolle16ergodic} for the monotone inclusion version~\eqref{eq:pdhg_inclusions}.

\section{Notation and hypotheses}

Throughout the note, we assume that $\mathcal{H}_1,\mathcal{H}_2$ are Hilbert spaces, $A,B$ are maximally monotone, and $C$ is a non-zero bounded linear operator. Furthermore, assume that $\psi_1:\mathcal{H}_1 \to \mathbb{R}$ and $\psi_2:\mathcal{H}_2 \to \mathbb{R}$ are continuously Fr\'{e}chet differentiable convex functions, and denote by
\begin{equation}\label{eq:bregman}
\begin{split}
    D_1(x,\Bar{x})=&\psi_1(x)-\psi_1(\Bar{x})-\langle \nabla \psi_1(\Bar{x}), x-\Bar{x} \rangle,\quad x,\Bar{x} \in \mathcal{H}_1,\\
    D_2(y,\Bar{y})=&\psi_2(y)-\psi_2(\Bar{y})-\langle \nabla \psi_2(\Bar{y}), y-\Bar{y} \rangle,\quad y,\Bar{y} \in \mathcal{H}_2,
\end{split}
\end{equation}
their Bregman divergences. We assume that there exists $\alpha>0$ such that
\begin{equation}\label{eq:timesteps_gen}
    D_1(x,\Bar{x})+D_2(y,\Bar{y}) - \langle C(x-\Bar{x}) , y- \Bar{y} \rangle \geq \alpha \left( \|x-\Bar{x}\|^2+\|y-\Bar{y}\|^2 \right),\quad\forall x,\Bar{x} \in \mathcal{H}_1,\quad\forall y,\Bar{y} \in \mathcal{H}_2.
\end{equation}
Taking $y=\Bar{y}$ we obtain
\begin{equation}\label{eq:psi1_strong}
    \psi_1(x)-\psi_1(\Bar{x})-\langle \nabla \psi_1(\Bar{x}), x-\Bar{x} \rangle=D_1(x,\Bar{x})\geq \alpha  \|x-\Bar{x}\|^2, \forall x,\Bar{x} \in \mathcal{H}_1,
\end{equation}
which means that $\psi_1$ is $2\alpha$-strongly convex. Similarly, we have that
\begin{equation}\label{eq:psi2_strong}
    \psi_2(y)-\psi_2(\Bar{y})-\langle \nabla \psi_2(\Bar{y}), y-\Bar{y} \rangle=D_2(y,\Bar{y})\geq \alpha  \|y-\Bar{y}\|^2, \forall y,\Bar{y} \in \mathcal{H}_2,
\end{equation}
and so $\psi_2$ is also $2\alpha$-strongly convex.
\begin{lemma}\label{lma:resolvent_gen}
    Assume that $\mathcal{H}$ is a Hilbert space, $\psi :\mathcal{H}\to \mathbb{R}$ is a continuously Fr\'{e}chet differentiable strongly convex function, and $M:\mathcal{H} \to 2^\mathcal{H}$ is a maximally monotone operator. Furthermore, denote by
    \begin{equation*}
        D(x,\Bar{x})=\psi(x)-\psi(\Bar{x})-\langle \nabla \psi(\Bar{x}), x-\Bar{x} \rangle,\quad x,\Bar{x} \in \mathcal{H}.
    \end{equation*}
    Then the map
    \begin{equation*}
        Tx=\nabla_x D(x,\Bar{x})+Mx,\quad x\in \mathcal{H},
    \end{equation*}
    is surjective for all $\Bar{x}\in \mathcal{H}$.
\end{lemma}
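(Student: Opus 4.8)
\emph{Proof idea.} Since $\psi$ is continuously Fr\'echet differentiable, $\nabla_x D(x,\Bar{x}) = \nabla\psi(x) - \nabla\psi(\Bar{x})$, so
\[
Tx = \nabla\psi(x) + Mx - \nabla\psi(\Bar{x}).
\]
As $\Bar{x}$ is fixed, $\nabla\psi(\Bar{x})$ is a constant vector, so the range of $T$ is a translate of the range of $x\mapsto \nabla\psi(x) + Mx$; in particular $T$ is surjective if and only if $\nabla\psi + M$ has range equal to $\mathcal{H}$. Thus the plan is to prove surjectivity of $\nabla\psi + M$.

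To do this, I would first use strong convexity of $\psi$ to fix $\mu>0$ for which $g := \psi - \tfrac{\mu}{2}\|\cdot\|^2$ is convex. The function $g$ is continuously Fr\'echet differentiable, being a difference of two such functions, and $\nabla\psi = \mu I + \nabla g$, hence
\[
\nabla\psi + M = \mu I + (\nabla g + M).
\]
Now $\nabla g$ is monotone (it is the gradient of a convex function), single-valued, continuous, and everywhere defined, hence maximally monotone; in particular $\operatorname{dom}\nabla g = \mathcal{H}$. Therefore the sum $\nabla g + M$ is maximally monotone by the standard sum theorem for maximally monotone operators, the constraint qualification $\operatorname{int}(\operatorname{dom}\nabla g)\cap\operatorname{dom}M\neq\emptyset$ being trivially satisfied.

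Finally I would invoke Minty's theorem in the form: for a maximally monotone operator $N$ on a Hilbert space and any $\lambda>0$, the operator $\lambda I + N$ is surjective. Applying this with $N = \nabla g + M$ and $\lambda=\mu$ shows that $\nabla\psi + M = \mu I + (\nabla g + M)$ is surjective, and hence so is $T$. The only points requiring some care are extracting the quantitative modulus $\mu$ from the qualitative hypothesis ``strongly convex'' and checking the hypotheses of the sum theorem; both are routine once one observes that $\nabla g$ has full domain and is continuous. (Equivalently, the last two steps may be phrased as: $\nabla\psi + M$ is maximally monotone and strongly monotone, and a strongly monotone maximally monotone operator is automatically surjective.)
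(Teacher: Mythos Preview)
Your argument is correct. Both proofs rely on the Rockafellar sum theorem to obtain maximal monotonicity of the relevant operator, but the route to surjectivity differs. The paper works directly with $T$: it shows $T$ is maximally monotone, then computes by hand the coercivity estimate $\inf\|Tx\|\ge 2\alpha\|x-x_0\|-\text{const}$ from strong monotonicity, and invokes the result that a maximally monotone operator whose minimal-norm selection blows up at infinity is surjective. You instead strip off the constant $\nabla\psi(\Bar{x})$, decompose $\nabla\psi = \mu I + \nabla g$ with $g$ convex, and apply Minty's theorem to $\mu I + (\nabla g + M)$. Your route is slightly more packaged---it hides the coercivity computation inside Minty's theorem---while the paper's argument makes the quantitative lower bound on $\|Tx\|$ explicit. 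The parenthetical reformulation you give at the end (``strongly monotone plus maximally monotone implies surjective'') is in fact exactly the mechanism behind the paper's estimate, so the two arguments are closer than they first appear; the main genuine difference is your use of the splitting $\nabla\psi=\mu I+\nabla g$ to reduce directly to Minty.
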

\begin{proof}
Fix an arbitrary $\Bar{x} \in \mathcal{H}$. Since $x\mapsto D(x,\Bar{x})$ is convex and smooth~\cite[Theorem 20.25]{bauschke17convex} yields that $x\mapsto \nabla_x D(x,\Bar{x})$ is maximally monotone with a domain $\mathcal{H}$. Hence, by~\cite[Theorem 1]{rockafellar70sum} we have that $T$ is maximally monotone.

Next, let $(x_0,y_0) \in \operatorname{gra}M$. Then for every $x\in \mathcal{H}$ we have that
\begin{equation*}
    \begin{split}
        \inf \|Tx\| =& \inf \|\nabla \psi(x)-\nabla \psi(x_0)+Mx-y_0+(\nabla \psi(x_0)+y_0-\nabla \psi(\Bar{x}))\|\\
        \geq& \inf \|\nabla \psi(x)-\nabla \psi(x_0)+Mx-y_0\|-\|\nabla \psi(x_0)+y_0-\nabla \psi(\Bar{x})\|.
    \end{split}
\end{equation*}
Furthermore, the strong convexity of $\psi$ yields that
\begin{equation*}
    \begin{split}
        \langle \nabla \psi(x)-\nabla \psi(x_0)+Mx-y_0, x-x_0 \rangle \geq 2\alpha \|x-x_0\|^2,
    \end{split}
\end{equation*}
for some $\alpha>0$, and from Cauchy-Schwarz inequality we obtain that
\begin{equation*}
    \inf \|\nabla \psi(x)-\nabla \psi(x_0)+Mx-y_0\| \geq 2\alpha \|x-x_0\|,\quad \forall x\in \mathcal{H}.
\end{equation*}
Hence
\begin{equation*}
     \inf \|T(x)\| \geq 2\alpha \|x-x_0\|- \|\nabla \psi(x_0)+y_0-\nabla \psi(\Bar{x})\|,\quad \forall x\in \mathcal{H},
\end{equation*}
which implies
\begin{equation*}
    \lim_{\|x\| \to \infty} \|Tx\| =\infty,
\end{equation*}
and~\cite[Corollary 21.24]{bauschke17convex} concludes the proof.
\end{proof}

\section{The algorithm and its convergence}

Considering the following primal-dual splitting algorithm
\begin{equation}\label{eq:pdhg_inclusions_Bregman}
     \begin{cases}
    x^{n+1}=(\nabla_x D_1(\cdot,x^n)+ A)^{-1} \left( -C^* y^n\right),\\
    \Tilde{x}^{n+1}=2x^{n+1}-x^n,\\
    y^{n+1}=(\nabla_y D_2(\cdot,y^n)+\sigma B^{-1})^{-1}\left( C \Tilde{x}^{n+1} \right).
    \end{cases}
\end{equation}
This previous algorithm is an extension of~\cite[Algorithm 1]{chambolle16ergodic}, where the subdifferential maps are replaced by general maximally monotone maps. When
\begin{equation*}
    \psi_1(x)=\frac{\|x\|^2}{2\tau}, \quad \psi_2(y)=\frac{\|y\|^2}{2\sigma},\quad x\in \mathcal{H}_1,~y \in \mathcal{H}_2,
\end{equation*}
we obtain
\begin{equation*}
    D_1(x,\Bar{x})=\frac{\|x-\Bar{x}\|^2}{2\tau},\quad D_2(y,\Bar{y})=\frac{\|y-\Bar{y}\|^2}{2\sigma},
\end{equation*}
and~\eqref{eq:pdhg_inclusions_Bregman} reduces to~\eqref{eq:pdhg_inclusions}. Moreover the existence of an $\alpha>0$ such that~\eqref{eq:timesteps_gen} holds is equivalent to~\eqref{eq:timesteps}.

Furthermore,~Lemma~\ref{lma:resolvent_gen} guarantees that all steps in~\eqref{eq:pdhg_inclusions_Bregman} are well defined, and the algorithm will not halt.

\begin{theorem}\label{thm:main}
    Assume that~\eqref{eq:main} admits a solution $(x^*,y^*)\in \mathcal{H}_1 \times \mathcal{H}_2$, and $(x^n,\Tilde{x}^n,y^n)$ are generated by~\eqref{eq:pdhg_inclusions_Bregman} with arbitrary initial points $(x^0,\Tilde{x}^0,y^0)\in \mathcal{H}_1\times \mathcal{H}_1 \times \mathcal{H}_2$. Then the ergodic sequence~$\{(X_N,Y_N)\}$ defined in~\eqref{eq:ergodic} is bounded, and all its weak limits are solutions of~\eqref{eq:main}.
\end{theorem}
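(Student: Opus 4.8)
The plan is to follow the Bregman-distance telescoping argument of~\cite{chambolle16ergodic}, replacing the convexity inequalities used there by the monotonicity inequalities for $A$ and $B^{-1}$. Since $\nabla_x D_1(x,\bar x)=\nabla\psi_1(x)-\nabla\psi_1(\bar x)$ and similarly for $D_2$, reading off~\eqref{eq:pdhg_inclusions_Bregman} gives, for every $n\ge 1$,
\[
p^n:=-C^*y^{n-1}-\nabla\psi_1(x^n)+\nabla\psi_1(x^{n-1})\in Ax^n,\qquad q^n:=C\tilde x^{n}-\nabla\psi_2(y^n)+\nabla\psi_2(y^{n-1})\in B^{-1}y^n,
\]
with $\tilde x^{n}=2x^{n}-x^{n-1}$. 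Fix $(x,\xi)\in\operatorname{gra}A$ and $(y,\eta)\in\operatorname{gra}B^{-1}$. Adding the monotonicity inequalities $\langle p^n-\xi,\,x^n-x\rangle\ge0$ and $\langle q^n-\eta,\,y^n-y\rangle\ge0$, eliminating the $\psi_i$-terms with the three-point identity $\langle\nabla\psi_i(b)-\nabla\psi_i(a),\,b-c\rangle=D_i(c,b)+D_i(b,a)-D_i(c,a)$, summing over $n=1,\dots,N$, and regrouping the bilinear terms with the help of $\tilde x^n=2x^n-x^{n-1}$ and $\langle C(x^n-x^{n-1}),y^n-y^{n-1}\rangle=\langle Cx^n,y^n\rangle-\langle Cx^n,y^{n-1}\rangle-\langle Cx^{n-1},y^n\rangle+\langle Cx^{n-1},y^{n-1}\rangle$, one is led (after dividing by $N$) to the master estimate
\[
\langle\xi+C^*y,\,X_N-x\rangle+\langle\eta-Cx,\,Y_N-y\rangle+\tfrac1N\Theta_N(x,y)+\tfrac{\alpha}{N}\sum_{n=1}^N\bigl(\|x^n-x^{n-1}\|^2+\|y^n-y^{n-1}\|^2\bigr)\le\tfrac1N\Theta_0(x,y),
\]
where $\Theta_k(x,y):=D_1(x,x^k)+D_2(y,y^k)-\langle C(x^k-x),\,y^k-y\rangle$. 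Hypothesis~\eqref{eq:timesteps_gen} enters twice: it bounds each surviving step term $D_1(x^n,x^{n-1})+D_2(y^n,y^{n-1})-\langle C(x^n-x^{n-1}),y^n-y^{n-1}\rangle$ below by $\alpha(\|x^n-x^{n-1}\|^2+\|y^n-y^{n-1}\|^2)$, and it yields $\Theta_k(x,y)\ge\alpha(\|x-x^k\|^2+\|y-y^k\|^2)\ge0$.

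Boundedness follows by applying the master estimate with $(x,\xi)=(x^*,-C^*y^*)$ and $(y,\eta)=(y^*,Cx^*)$, an admissible choice because $(x^*,y^*)$ solves~\eqref{eq:main}. Then $\xi+C^*y=0$ and $\eta-Cx=0$, so the first two terms vanish and $\Theta_N(x^*,y^*)\le\Theta_0(x^*,y^*)$ for all $N$; together with $\Theta_N(x^*,y^*)\ge\alpha(\|x^N-x^*\|^2+\|y^N-y^*\|^2)$ this gives $\|x^N-x^*\|^2+\|y^N-y^*\|^2\le\Theta_0(x^*,y^*)/\alpha$ uniformly in $N$. Hence the iterates $(x^n,y^n)$, and therefore the ergodic averages $(X_N,Y_N)$, remain in a fixed ball, and in addition $\sum_{n\ge1}(\|x^n-x^{n-1}\|^2+\|y^n-y^{n-1}\|^2)<\infty$.

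For the statement about weak limits, discard the two nonnegative terms $\tfrac1N\Theta_N(x,y)$ and $\tfrac{\alpha}{N}\sum(\cdots)$ from the master estimate to obtain, for every $(x,\xi)\in\operatorname{gra}A$ and $(y,\eta)\in\operatorname{gra}B^{-1}$,
\[
\langle\xi+C^*y,\,X_N-x\rangle+\langle\eta-Cx,\,Y_N-y\rangle\le\frac{\Theta_0(x,y)}{N}\longrightarrow0\quad(N\to\infty).
\]
If $(X_{N_k},Y_{N_k})\rightharpoonup(\bar x,\bar y)$ along a subsequence, passing to the limit (the vectors $\xi+C^*y$ and $\eta-Cx$ are fixed) yields $\langle\xi+C^*y,\,\bar x-x\rangle+\langle\eta-Cx,\,\bar y-y\rangle\le0$ for all such pairs. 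Now consider $\mathcal M:\mathcal H_1\times\mathcal H_2\to2^{\mathcal H_1\times\mathcal H_2}$, $\mathcal M(u,v):=(Au+C^*v)\times(B^{-1}v-Cu)$, namely the sum of the maximally monotone operator $A\times B^{-1}$ and the everywhere-defined bounded skew-linear operator $(u,v)\mapsto(C^*v,-Cu)$; by~\cite[Theorem 1]{rockafellar70sum} (or the corresponding statement in~\cite{bauschke17convex}) $\mathcal M$ is maximally monotone, and $(0,0)\in\mathcal M(\bar x,\bar y)$ is exactly the assertion that $(\bar x,\bar y)$ solves~\eqref{eq:main}. Since as $(x,\xi)$ ranges over $\operatorname{gra}A$ and $(y,\eta)$ over $\operatorname{gra}B^{-1}$ the pairs $\bigl((x,y),(\xi+C^*y,\eta-Cx)\bigr)$ range over all of $\operatorname{gra}\mathcal M$, the inequality just obtained says precisely that $\bigl((\bar x,\bar y),(0,0)\bigr)$ is monotonically related to $\operatorname{gra}\mathcal M$, so maximality forces $(0,0)\in\mathcal M(\bar x,\bar y)$, i.e.\ $(\bar x,\bar y)$ solves~\eqref{eq:main}.

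The main obstacle is the derivation of the master estimate: one must arrange the telescoping so that the only boundary term that survives is the controllable quantity $\Theta_N(x,y)$ rather than an indefinite bilinear remainder, and this is exactly where the extrapolation $\tilde x^n=2x^n-x^{n-1}$ together with the coercivity hypothesis~\eqref{eq:timesteps_gen} are used in an essential way; once that identity is in hand, the remaining steps are routine.
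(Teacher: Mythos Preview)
Your argument is correct and follows essentially the same route as the paper: both proofs combine the monotonicity of $A$ and $B^{-1}$ with the Bregman three-point identity to obtain a one-step inequality that telescopes into your master estimate (the paper packages the left-hand side as the function $\mathcal{L}(x,\zeta;y,\eta)$, but the content is identical), then use~\eqref{eq:timesteps_gen} at $(x^*,y^*)$ for boundedness and let $N\to\infty$ for weak limits. The only substantive difference is the concluding step: the paper shows $\mathcal{L}(x,X;y,Y)\le 0$ and specializes to $y=Y$ and $x=X$ to invoke maximal monotonicity of $A$ and $B^{-1}$ separately, whereas you invoke maximal monotonicity of the aggregated operator $\mathcal{M}=(A\times B^{-1})+\begin{pmatrix}0&C^*\\-C&0\end{pmatrix}$; the two arguments are equivalent, yours being slightly more compact at the cost of citing the sum rule for maximal monotonicity.
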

\begin{proof}
    We introduce the following function
    \begin{equation}\label{eq:L}
    \begin{split}
        \mathcal{L}(x,\zeta;y,\eta)=&\sup_{(u,v)\in Ax\times  B^{-1}y}\langle x-\zeta, -u-C^*\eta \rangle+\langle C \zeta-v, y-\eta \rangle\\
        =&\sup_{(u,v)\in Ax\times  B^{-1}y} \langle \zeta-x, u \rangle+\langle \eta-y, v  \rangle - \langle C x,\eta \rangle + \langle C \zeta, y \rangle, 
    \end{split}
    \end{equation}
    where we set the supremum of an empty set to be $-\infty$. As pointed out in~\cite{chambolle16ergodic}, the basic building block of~\eqref{eq:pdhg_inclusions_Bregman} is the iteration
    \begin{equation}\label{eq:basic_iter}
        \begin{cases}
            \hat{x}=(\nabla_x D_1(\cdot,\Bar{x})+ A)^{-1} \left( -C^* \Tilde{y}\right),\\
            \hat{y}=(\nabla_y D_2(\cdot,\Bar{y})+\sigma B^{-1})^{-1}\left( C \Tilde{x} \right),
        \end{cases}
    \end{equation}
    for suitable choices of $\Bar{x},\hat{x},\Tilde{x}$ and $\Bar{y},\hat{y},\Tilde{y}$. In an expanded form,~\eqref{eq:basic_iter} can be written as
    \begin{equation}\label{eq:basic_iter_expanded}
        \begin{cases}
            \nabla_x D_1(\hat{x},\Bar{x})+\hat{u}=-C^* \Tilde{y},\\
            \nabla_y D_2(\hat{y},\Bar{y})+\hat{v}=C \Tilde{x},
        \end{cases}
    \end{equation}
    where $(\hat{u},\hat{v})\in A \hat{x} \times B^{-1} \hat{y}$. Thus, we first obtain estimates for the general iteration~\eqref{eq:basic_iter_expanded} and then apply them to~\eqref{eq:pdhg_inclusions_Bregman}.

    Let~\eqref{eq:basic_iter_expanded} hold, and $(x,y)\in \mathcal{H}_1 \times \mathcal{H}_2$, $(u,v) \in Ax \times B^{-1}y$ be arbitrary. Then by the monotonicity of $A$ and~\eqref{eq:basic_iter_expanded} we have that
    \begin{equation}\label{eq:aux1}
        \begin{split}
            \langle u, x-\hat{x} \rangle \geq & \langle \hat{u},x-\hat{x} \rangle = \langle -C^* \Tilde{y}-\nabla_x D_1(\hat{x},\Bar{x}),x-\hat{x} \rangle\\
            =&\langle -C^* \Tilde{y},x-\hat{x} \rangle+D_1(\hat{x},\Bar{x})+D_1(x,\hat{x})-D_1(x,\Bar{x}),
        \end{split}
    \end{equation}
    where we also used the identity
    \begin{equation*}
        \langle-\nabla_x D_1(\hat{x},\Bar{x}),x-\hat{x} \rangle=D_1(\hat{x},\Bar{x})+D_1(x,\hat{x})-D_1(x,\Bar{x}).
    \end{equation*}
    Similarly, using the monotonicity of $B^{-1}$ we obtain
    \begin{equation}\label{eq:aux2}
        \begin{split}
            \langle v, y-\hat{y} \rangle \geq & \langle \hat{v},y-\hat{y} \rangle = \langle C \Tilde{x}-\nabla_x D_2(\hat{x},\Bar{x}),x-\hat{x} \rangle\\
            =&\langle C \Tilde{x},y-\hat{y} \rangle+D_2(\hat{y},\Bar{y})+D_2(y,\hat{y})-D_2(y,\Bar{y}).
        \end{split}
    \end{equation}
    Combining~\eqref{eq:aux1},~\eqref{eq:aux2}, we obtain
    \begin{equation*}
        \begin{split}
            &D_1(x,\Bar{x})-D_1(\hat{x},\Bar{x})-D_1(x,\hat{x})+D_2(y,\Bar{y})-D_2(\hat{y},\Bar{y})-D_2(y,\hat{y})\\
            \geq& \langle x-\hat{x}, -u-C^* \Tilde{y} \rangle+ \langle C \Tilde{x}-v,y-\hat{y} \rangle\\
            =&\langle x-\hat{x}, -u-C^* \hat{y} \rangle+ \langle C \hat{x}-v,y-\hat{y} \rangle+\langle C(x-\hat{x}), \hat{y}-\Tilde{y} \rangle+ \langle C(\Tilde{x}-\hat{x}), y-\hat{y} \rangle.
        \end{split}
    \end{equation*}
    Since $(u,v) \in A x \times B^{-1}y$ are arbitrary, we obtain that
    \begin{equation}\label{eq:main_estimate}
    \begin{split}
        \mathcal{L}(x,\hat{x};y,\hat{y}) \leq & D_1(x,\Bar{x})-D_1(\hat{x},\Bar{x})-D_1(x,\hat{x})+D_2(y,\Bar{y})-D_2(\hat{y},\Bar{y})-D_2(y,\hat{y})\\
        &+\langle C(x-\hat{x}), \Tilde{y}-\hat{y} \rangle+ \langle C(\Tilde{x}-\hat{x}), \hat{y}-y \rangle,\quad \forall x\in \mathcal{H}_1,~y\in \mathcal{H}_2.
    \end{split}
    \end{equation}
    As in~\cite{chambolle16ergodic}, this previous inequality is the key inequality in the proof. Indeed,~\eqref{eq:pdhg_inclusions_Bregman} corresponds to choosing
    \begin{equation*}
        \hat{x}=x^{n+1},~\Bar{x}=x^n,~\Tilde{x}^{n+1}=2x^{n+1}-x^n,~\hat{y}=y^{n+1},~\Bar{y}=y^n,~\Tilde{y}=y^n,
    \end{equation*}
    in~\eqref{eq:basic_iter}, and so~\eqref{eq:main_estimate} yields
    \begin{equation*}
        \begin{split}
        \mathcal{L}(x,x^{n+1};y,y^{n+1}) \leq & \left\{ D_1(x,x^n)+D_2(y,y^n)-\langle C(x-x^n),y-y^n \rangle\right\}\\
        &-\left\{ D_1(x,x^{n+1})+D_2(y,y^{n+1})-\langle C(x-x^{n+1}),y-y^{n+1} \rangle\right\}\\
        &-\left\{ D_1(x^{n+1},x^{n})+D_2(y^{n+1},y^{n})-\langle C(x^{n+1}-x^{n}),y^{n+1}-y^{n} \rangle\right\}.
    \end{split}
    \end{equation*}
    Hence, by the convexity of $(\zeta,\eta) \mapsto \mathcal{L}(x,\zeta;y,\eta)$, we obtain
    \begin{equation}\label{eq:aux3}
        \begin{split}
          N \mathcal{L}(x,X^N;y,Y^N) \leq &  \sum_{n=1}^N \mathcal{L}(x,x^n;y,y^n)\\
            \leq & \left\{ D_1(x,x^0)+D_2(y,y^0)-\langle C(x-x^0),y-y^0 \rangle\right\}\\
            & - \left\{ D_1(x,x^{N})+D_2(y,y^{N})-\langle C(x-x^{N}),y-y^{N} \rangle\right\}\\
            &-\sum_{n=1}^N \left\{ D_1(x^{n},x^{n-1})+D_2(y^{n},y^{n-1})-\langle C(x^{n}-x^{n-1}),y^{n}-y^{n-1} \rangle\right\},
        \end{split}
    \end{equation}
    for all $x\in \mathcal{H}_1$, $y\in \mathcal{H}_2$, and $N\in \mathbb{N}$. Note that~\eqref{eq:timesteps_gen} guarantees that the expressions in the curly brackets are nonnegative.
    
    Recall that $(x^*,y^*)$ is a solution of~\eqref{eq:main}, and so
    \begin{equation}\label{eq:x*y*sol}
        -C^*y^* \in Ax^*,\quad Cx^* \in B^{-1} y^*.
    \end{equation}
    But then by the definition of $\mathcal{L}$ we have that
    \begin{equation*}
        \mathcal{L}(x^*,\zeta;y^*,\eta)\geq \langle x^*-\zeta,C^* y^*-C^*\eta \rangle+\langle C \zeta-Cx^*, y^*-\eta \rangle=0,\quad \forall \zeta\in \mathcal{H}_1,~\forall \eta\in \mathcal{H}_2.
    \end{equation*}
    In particular, we have that
    \begin{equation}\label{eq:pd_gap>=0}
    \mathcal{L}(x^*,X^N;y^*,Y^N)\geq 0,    
    \end{equation}
    and~\eqref{eq:aux3} yields that
    \begin{equation*}
        D_1(x^*,x^{N})+D_2(y^*,y^{N})-\langle C(x^*-x^{N}),y^*-y^{N} \rangle\leq D_1(x^*,x^0)+D_2(y^*,y^0)-\langle C(x^*-x^0),y-y^0 \rangle,
    \end{equation*}
    and~\eqref{eq:timesteps_gen} implies that
    \begin{equation*}
        \|x^N-x^*\|^2+\|y^N-y^*\|^2 \leq \frac{D_1(x^*,x^0)+D_2(y^*,y^0)-\langle C(x^*-x^0),y-y^0 \rangle}{\alpha},\quad \forall N\in \mathbb{N}.
    \end{equation*}
    Therefore, $\{(x^n,y^n)\}$ is a bounded sequence, and the convexity of the norm yields the boundedness of the ergodic sequence with the same bounds; that is,
    \begin{equation*}
        \|X^N-x^*\|^2+\|Y^N-y^*\|^2 \leq \frac{D_1(x^*,x^0)+D_2(y^*,y^0)-\langle C(x^*-x^0),y-y^0 \rangle}{\alpha},\quad \forall N\in \mathbb{N}.
    \end{equation*}

    Assume that $(X,Y)$ is a weak (subsequential) limit of $\{(X_N,Y_N)\}$. Invoking~\eqref{eq:aux3} again, we obtain
    \begin{equation}\label{eq:aux4}
        \mathcal{L}(x,X^N;y,Y^N)\leq \frac{ D_1(x,x^0)+D_2(y,y^0)-\langle C(x-x^0),y-y^0 \rangle}{N},
    \end{equation}
    for all $x\in \mathcal{H}_1$, $y\in \mathcal{H}_2$, and $N\in \mathbb{N}$. Let $(u,v)\in Ax \times B^{-1}y$ be arbitrary. Then we have that
    \begin{equation*}
        \begin{split}
            \langle X^N-x, u \rangle+\langle Y^N- y,v \rangle-\langle C x,Y^N \rangle+\langle C X^N, y \rangle \leq \mathcal{L}(x,X^N;y,Y^N),
        \end{split}
    \end{equation*}
    and so the weak convergence and~\eqref{eq:aux4} yield
    \begin{equation*}
        \begin{split}
        &\langle X-x, u \rangle+\langle Y- y,v \rangle-\langle C x,Y \rangle+\langle C X, y \rangle\\
          =&  \lim_{N \to \infty}\langle X^N-x, u \rangle+\langle Y^N- y,v \rangle-\langle C x,Y^N \rangle+\langle C X^N, y \rangle\\
          \leq & \liminf_{N \to \infty} \mathcal{L}(x,X^N;y,Y^N) \leq 0.
        \end{split}
    \end{equation*}
    Therefore we have that
    \begin{equation}\label{eq:L_optimal}
        \mathcal{L}(x,X;y,Y)\leq 0,\quad \forall x\in \mathcal{H}_1,~y \in \mathcal{H}_2.
    \end{equation}
    Taking $y=Y$ in~\eqref{eq:L_optimal} we obtain
    \begin{equation*}
        \langle x-X, u+C^* Y \rangle \geq 0,\quad \forall (x,u) \in \operatorname{gra} A,
    \end{equation*}
    and so maximal monotonicity of $A$ yields that
    \begin{equation}\label{eq:aux5}
        (X,-C^* Y) \in \operatorname{gra} A \Longleftrightarrow -C^* Y \in A X.
    \end{equation}
    Similarly, plugging in $x=X$ in~\eqref{eq:L_optimal} we find that
    \begin{equation*}
        \langle y-Y,v-CX \rangle \geq 0,\quad \forall (y,v)\in \operatorname{gra} B^{-1},
    \end{equation*}
    and the maximal monotonicity of $B^{-1}$ yields that
    \begin{equation}\label{eq:aux6}
        (Y,CX) \in \operatorname{gra} B^{-1} \Longleftrightarrow Y \in B(CX).
    \end{equation}
    Combining~\eqref{eq:aux5} and \eqref{eq:aux6} we obtain that $(X,Y)$ is a solution of~\eqref{eq:main}.
\end{proof}

\bibliographystyle{plain}
\bibliography{pdhg}

\begin{thebibliography}{1}

\bibitem{bauschke17convex}
H.~H. Bauschke and P.~L. Combettes.
\newblock {\em Convex analysis and monotone operator theory in {H}ilbert spaces}.
\newblock CMS Books in Mathematics/Ouvrages de Math\'{e}matiques de la SMC. Springer, Cham, second edition, 2017.
\newblock With a foreword by H\'{e}dy Attouch.

\bibitem{chambolle11firstorder}
A.~Chambolle and T.~Pock.
\newblock A first-order primal-dual algorithm for convex problems with applications to imaging.
\newblock {\em J. Math. Imaging Vision}, 40(1):120--145, 2011.

\bibitem{chambolle16ergodic}
A.~Chambolle and T.~Pock.
\newblock On the ergodic convergence rates of a first-order primal-dual algorithm.
\newblock {\em Math. Program.}, 159(1-2):253--287, 2016.

\bibitem{combettes04averaged}
P.~L. Combettes.
\newblock Solving monotone inclusions via compositions of nonexpansive averaged operators.
\newblock {\em Optimization}, 53(5-6):475--504, 2004.

\bibitem{rockafellar70sum}
R.~T. Rockafellar.
\newblock On the maximality of sums of nonlinear monotone operators.
\newblock {\em Trans. Amer. Math. Soc.}, 149:75--88, 1970.

\bibitem{vu13}
B.~C. V\~{u}.
\newblock A splitting algorithm for dual monotone inclusions involving cocoercive operators.
\newblock {\em Adv. Comput. Math.}, 38(3):667--681, 2013.

\end{thebibliography}

\end{document}